\renewcommand*{\eqref}[1]{
	\hyperref[{#1}]{\textup{\tagform@{\ref*{#1}}}}}
\newcommand{\ind}{{\rm ind}}
\newcommand{\conj}{\mathrm{conj}}
\newcommand{\cri}{\mathrm{cr}}
\newcommand{\beq}{\begin{equation}}
\newcommand{\beqn}{\begin{equation*}}
\newcommand{\conc}{\mathrm{con}}
\newcommand{\eeq}{\end{equation}}
\newcommand{\eeqn}{\end{equation*}}
\newcommand{\R}{\mathbb{R}}
\newcommand{\ric}{\textrm{Ric}}
\newcommand{\n}{\mathbb{N}}
\newcommand{\q}{\mathbb{Q}}
\theoremstyle{plain}
\newtheorem{theorem}{Theorem}
\newtheorem{lemma}{Lemma}
\theoremstyle{definition}
\theoremstyle{remark}
\newtheorem{remark}{Remark}
\newtheorem{number-env}[theorem]{}
\title{Upper bounds for the critical values
of homology classes of loops}
\author{Hans-Bert Rademacher}
\address{Mathematisches Institut, 
Universit{\"a}t Leipzig, D--04081 Leipzig, Germany}
\email{rademacher@math.uni-leipzig.de}
\urladdr{www.math.uni-leipzig.de/\symbol{126}rademacher}
\date{2022-03-26}
\subjclass[2020]{53C22, 58E10}
\keywords{closed geodesic, 
	critical value of a 
	homology class, free loop space,
	based loop space, Morse index,
	positive Ricci curvature, positive sectional
	curvature}
\begin{document}
\begin{abstract}
In this short note we discuss upper bounds for
the critical values of homology classes
in the based and free loop space of 
manifolds carrying a Riemannian or Finsler
metric of positive Ricci curvature.
In particular it follows that a shortest closed
geodesic on a
simply-connected $n$-dimensional
manifold of 
positive Ricci curvature $\ric \ge n-1$
has length $\le n \pi.$
\end{abstract}
\maketitle
\baselineskip 18pt
\section{Results}
We start with a compact differentiable
manifold $M$ equipped with a 
Riemannian metric $g$ resp. a
non-reversible Finsler metric $f.$
Then the corresponding \emph{norm}
$\|v\|$ of a tangent vector $v$ is defined
by $\|v\|^2=g(v,v)$ resp. $\|v\|=f(v).$
In the following we use as common notation
also for a Finsler metric the letter $g.$
For a non-negative integer
$k \in \mathbb{N}_0=\n \cup \{0\}$ let 
$\conj(k) \in (0,\infty]$ be the infimum of all
$L>0$ such that any geodesic 
$c$ of length at least $L$
has Morse index $
\ind_{\Omega}(c)$ at least $(k+1).$
Hence for any geodesic $c:[0,1]\longrightarrow M$
of length $>\conj (k)$ the Morse index
is at least $(k+1).$
By the \emph{Morse index theorem}
it follows that there are $(k+1)$ conjugate points $c(s)$
with $0<s<1,$ which are conjugate to $c(0)$ along
$c|[0,s].$ Here we count conjugate points with
multiplicity, cf.~\cite[Sec. 2.5]{Kl}.
And we conclude:
$\conj(k)\le (k+1)\,\conj(0).$
 
Let $P([0,1],M)$ be the space of $H^1$-curves
$\gamma:[0,1]\longrightarrow M$ on the manifold
$M.$
Let $l, E, F: P=P([0,1],M)\longrightarrow \R$
denote the following functionals on this space.
The \emph{length} $l(c),$ resp. the \emph{energy}
$E(c)$ is defined as
\begin{equation*}
l(\gamma)=\int_0^1 \|\gamma'(t))\|\,dt\,;\,
E(\gamma)=\frac{1}{2}\int_0^1 \|\gamma'(t)\|^2\,dt\,.
\end{equation*}
We use instead of $E$ the 
\emph{square root energy functional} $F: P([0,1],M)\longrightarrow\R$ with $F(\gamma)=\sqrt{2E(\gamma)},$
cf.~\cite[Sec. 1]{HR}.
For a curve parametrized proportional to arc length
we have $F(\gamma)=l(\gamma).$ We consider the following subspaces of $P.$
The \emph{free loop space} $\Lambda M$ is the subset of
loops $\gamma$ with $\gamma(0)=\gamma(1).$
For points $p,q\in M$ the space
$\Omega_{pq} M$ is the subspace of curves
$\gamma$ joining $p=\gamma(0)$ and
$q=\gamma(1).$ The \emph{(based) loop space}
$\Omega_p M$ equals $\Omega_{pp}M .$
As common notation
we use $X,$ i.e. $X$ denotes $\Lambda M, 
\Omega_{pq}M,$ or $ \Omega_pM.$
It is well known that the critical points of
the 
square root energy 
functional $F:X \longrightarrow \R$ are
geodesics joining $p$ and $q$ for $X=\Omega_{pq}(M),$
the closed (periodic) geodesics for $X=\Lambda M,$
and the geodesic loops for $X=\Omega_p(M).$
The index form
$I_c$ can be identified with the hessian 
$d^2 E(c)$ of the energy functional, for the two
cases $X=\Lambda M$ resp. $X=\Omega_{pq}M$
(allowing also $p=q$)
we obtain different indices
$\ind_{\Lambda} (c)$ resp.
$\ind_{\Omega} (c).$ 
If 
$c\in \Lambda M$ is a closed geodesic 
with index $\ind_{\Lambda}(c)$ then
for $p=c(0)$
it is at the same time a geodesic loop
$c \in \Omega_p M$
with index $\ind_{\Omega}(c).$
The difference 
$\conc (c)=\ind_{\Lambda}c -\ind_{\Omega} c$
is called \emph{concavity}. It satisfies 
$0 \le \conc (c) \le n-1,$
cf.~\cite[thm. 2.5.12]{Kl} for the Riemannian case
and~\cite[Sec. 6]{Ra04} for the Finsler case. 

We use the following notation for sublevel sets
of $F:$ 
$
X^{\le a}=\{\gamma \in X,; F(\gamma)\le a\},
X^a=\{\gamma \in X;F(\gamma)=a\}.
$
For a non-trivial homology class $h \in H_j(X,X^{\le b};R)$ we
denote by $\cri_X(h)$ the \emph{critical value}, i.e.
the minimal value $a\ge b$ such that
$h$ lies in the image of the homomorphism
$	H_j(X^{\le a},X^{\le b};R)
	\longrightarrow
	H_j(X,X^{\le b};R)$
induced by the inclusion, cf.~\cite[Sec.1]{HR}.
It follows that for a non-trivial homology
class $h \in H_j(X,X^{\le b};R)$ 
there exists a geodesic in $X$ with
length $l(c)=\cri_X(h).$ 
Its index satisfies $\ind_X(c)\le j.$

The Morse theory of the functional
$F: X \longrightarrow \R$ implies
\begin{theorem}
	\label{thm:one}
Let $M$ be a compact manifold endowed with a Riemannian
metric resp. non-reversible Finsler metric $g.$
Let $h \in H_*(X,X^{\le b};R)$ 
be a non-trivial homology class
of degree $\deg(h)$ 
for some coefficient field $R.$ Then $\cri_X(h)\le \conj(\deg(h))\le
(1+\deg(h))\,\conj(0),$ 
and
the homomorphism
\begin{equation*}
H_j(X^{\le \conj(\deg(h))},X^{\le b};R)\longrightarrow 
H_j(X,X^{\le b};R)
\end{equation*}
induced by the inclusion is surjective
for all $j\le \deg(h).$
\end{theorem}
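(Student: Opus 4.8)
The plan is to reduce everything to the minimax description of the critical value recorded in the paragraph preceding the theorem: for a non-trivial class $\alpha\in H_j(X,X^{\le b};R)$ there is a geodesic $c\in X$ with $l(c)=\cri_X(\alpha)$ and $\ind_X(c)\le j$. Granting this, the theorem follows from two elementary observations. The first is that in every case $\ind_\Omega(c)\le\ind_X(c)$: for $X=\Omega_{pq}M$ (including $p=q$) one has equality, and for $X=\Lambda M$ the identity $\ind_\Lambda(c)=\ind_\Omega(c)+\conc(c)$ together with $\conc(c)\ge0$ gives the inequality. The second is that $k\mapsto\conj(k)$ is non-decreasing, which is immediate from the definition, since a length bound forcing $\ind_\Omega\ge k+2$ a fortiori forces $\ind_\Omega\ge k+1$.

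First I would establish $\cri_X(h)\le\conj(\deg(h))$. Put $j_0=\deg(h)$ and take the geodesic $c$ with $l(c)=\cri_X(h)$ and $\ind_X(c)\le j_0$. Then $\ind_\Omega(c)\le\ind_X(c)\le j_0$. Since every geodesic of length $>\conj(j_0)$ has $\ind_\Omega\ge j_0+1$, a geodesic with $\ind_\Omega(c)\le j_0$ must satisfy $l(c)\le\conj(j_0)$; hence $\cri_X(h)=l(c)\le\conj(j_0)$. Together with the bound $\conj(k)\le(k+1)\conj(0)$ already proved in the introduction this gives the full chain $\cri_X(h)\le\conj(\deg(h))\le(1+\deg(h))\conj(0)$. (If $\conj(0)=\infty$ all assertions are vacuous, so one may assume $\conj(0)<\infty$, whence all $\conj(k)$ are finite.)

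Next I would deduce the surjectivity statement from the inequality just proved, applied in each degree. Fix $j\le j_0$ and an arbitrary class $\alpha\in H_j(X,X^{\le b};R)$; the goal is to show $\alpha$ lies in the image of the inclusion-induced map from $H_j(X^{\le\conj(j_0)},X^{\le b};R)$. For $\alpha=0$ this is trivial, so assume $\alpha\ne0$. Applying the inequality to the non-trivial degree-$j$ class $\alpha$ gives $\cri_X(\alpha)\le\conj(j)$, and monotonicity of $\conj$ gives $\conj(j)\le\conj(j_0)$; set $a_0=\cri_X(\alpha)\le\conj(j_0)$. By definition the critical value is attained, so $\alpha$ lies in the image of $H_j(X^{\le a_0},X^{\le b};R)\to H_j(X,X^{\le b};R)$; since $a_0\le\conj(j_0)$, the inclusion $(X^{\le a_0},X^{\le b})\hookrightarrow(X,X^{\le b})$ factors through $(X^{\le\conj(j_0)},X^{\le b})$, and hence $\alpha$ already lies in the image of $H_j(X^{\le\conj(j_0)},X^{\le b};R)\to H_j(X,X^{\le b};R)$, as wanted.

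The only substantial ingredient is the minimax realization used at the outset: that the infimum defining $\cri_X(h)$ is attained as the length of an honest geodesic whose index in $X$ is at most $\deg(h)$. This is the standard minimax argument of critical point theory on $X$, resting on the Palais--Smale condition for $F$, the deformation lemma for regular levels, and the fact that a critical level at which every critical point has index exceeding $\deg(h)$ does not obstruct pushing a degree-$\deg(h)$ class below it. That is precisely what is recorded, with references, in the paragraph preceding the theorem; given it, the remaining work is the order-theoretic bookkeeping above, and I expect no further obstacle.
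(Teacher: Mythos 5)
Your deduction is internally correct, and all the bookkeeping is right: the monotonicity of $k\mapsto\conj(k)$, the inequality $\ind_{\Omega}(c)\le\ind_X(c)$ via $\conc(c)\ge 0$, and the factoring of the inclusion through $(X^{\le \conj(\deg(h))},X^{\le b})$ for the surjectivity statement. But you take a genuinely different route from the paper, and the difference sits exactly at the one step you flag as ``standard''. You derive everything from the assertion, recorded in the paragraph before the theorem, that $\cri_X(h)$ is attained by a geodesic $c$ with $\ind_X(c)\le\deg(h)$. That assertion is immediate for a Morse function, but for the actual functional $F$ the critical set at the level $\cri_X(h)$ may consist of degenerate, non-isolated geodesics (and, on $\Lambda M$, of $S^1$-orbits of closed geodesics), and the claim that a level at which every critical point has index $>\deg(h)$ does not obstruct pushing a degree-$\deg(h)$ class below it then needs a localization/deformation argument you do not supply. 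The paper's proof is built precisely to avoid invoking this: for $X=\Omega_{pq}M$ it uses Sard's theorem to replace $q$ by nearby points $q_j$ not conjugate to $p$, so that $F_j$ on $\Omega_{pq_j}M$ is an honest Morse function whose degree-$k$ class hangs up at a geodesic of index exactly $k$, and then passes to the limit using the homotopy equivalences $\zeta_{qq_j}$; for $X=\Lambda M$ it approximates $g$ by bumpy metrics, where $F_j$ is Morse--Bott with one-dimensional critical orbits, obtains $\ind_{\Lambda}(c_j)\in\{k-1,k\}$, hence $\ind_{\Omega}(c_j)\le k$ and $l_j(c_j)\le\conj_j(k)$, and again passes to the limit. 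So your argument buys brevity by treating the degenerate-case minimax realization as a black box, while the paper's perturbation arguments are longer but self-contained on exactly that point; to make your version stand alone you would need to prove the index bound at a possibly degenerate critical level (e.g. via Gromoll--Meyer local homology for isolated critical orbits, plus an argument for non-isolated critical sets) or substitute one of the two approximation arguments.
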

For positive Ricci curvature 
$\ric$ and 
for positive sectional curvature $K$
(resp. positive flag curvature
$K$ in the case of
a Finsler metric) we obtain
in Lemma~\ref{lem:conj}  upper bounds for 
the sequence  $\conj(k), k\in \n_0.$ 
As a consequence we
obtain:
\begin{theorem}
	\label{thm:two}
	Let $(M,g)$ be a compact 
	$n$-dimensional
	Riemannian or Finsler
	manifold.
		
	\smallskip
	
	(a) If $\ric \ge (n-1) \delta$ 
	for $\delta >0$ then
$\cri_X (h) \le \pi (\deg (h)+1) /\sqrt{\delta}$
for a non-trivial homology class
$h \in H_{*}(X,X^{\le b};R)$
of degree $\deg(h).$

\smallskip

(b) If $K \ge \delta$ for $\delta >0$ then 
$\cri_X(h) \le \pi \{1+\deg (h)/(n-1)\} /\sqrt{\delta}$
for a non-trivial homology class
$
h \in H_{*}(X, X^{\le b};R)$
of degree $\deg (h).$

\smallskip

(c) If $K \le 1$ then 
$\cri_{\Omega}(h)\ge  \left[\deg(h)/(n-1)\right]\pi$
for $h \in H_*(\Omega_{pq}M;R)$ 
and $\cri_{\Lambda}(h)\ge \left\{\left[\deg(h)/(n-1)\right]-1\right\}\pi$
for $h \in H_*(\Lambda M, \Lambda^{\le b}M;R).$
Here for a real number $x$ we denote
by $[x]$ the largest integer $\le x.$
\end{theorem}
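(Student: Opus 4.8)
The plan is to deduce (a) and (b) directly from Theorem~\ref{thm:one} together with the estimates on $\conj(k)$ that constitute Lemma~\ref{lem:conj}, and to prove (c) by the comparison theory for an \emph{upper} curvature bound. For (a) I would establish, in Lemma~\ref{lem:conj}, the bound $\conj(k)\le(k+1)\pi/\sqrt\delta$ by the classical Myers argument: given a unit-speed geodesic $c\colon[0,L]\to M$ with $L>(k+1)\pi/\sqrt\delta$, subdivide $[0,L]$ into $k+1$ subintervals $[a_i,a_{i+1}]$ of length $>\pi/\sqrt\delta$; on each of them the fields $\sin\bigl(\pi(t-a_i)/(a_{i+1}-a_i)\bigr)e_j(t)$, $j=1,\dots,n-1$, built from a parallel orthonormal frame normal to $c'$, span an $(n-1)$-dimensional subspace on which the index form is negative, since the trace of the index form over this frame equals $\int_{a_i}^{a_{i+1}}\bigl[(\pi/(a_{i+1}-a_i))^2\cos^2-\ric(c',c')\sin^2\bigr]\,dt<0$. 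Extending by zero and using disjointness of supports gives $\ind_\Omega(c)\ge k+1$, hence $\conj(k)\le(k+1)\pi/\sqrt\delta$; feeding this into Theorem~\ref{thm:one} yields (a). For (b) I would replace the Ricci bound by the sectional bound $K\ge\delta$: then on each subinterval of length $>\pi/\sqrt\delta$ \emph{every} one of the $n-1$ fields above is a negative direction, so a geodesic of length $>m\pi/\sqrt\delta$ has $\ind_\Omega\ge m(n-1)$; consequently $\conj(k)\le\lceil(k+1)/(n-1)\rceil\pi/\sqrt\delta\le(1+k/(n-1))\pi/\sqrt\delta$, and again Theorem~\ref{thm:one} gives (b).

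For (c) I would argue as follows. Put $j=\deg(h)$. As recalled before Theorem~\ref{thm:one}, $\cri_X(h)$ is attained by a geodesic $c\in X$ with $l(c)=\cri_X(h)$; since $h$ first becomes visible at this critical level, the Morse theory of $F$ in fact gives the sandwich $\ind_X(c)\le j\le\ind_X(c)+\egon_X(c)$ (the right-hand inequality expresses that the local homology at a critical level sits in degrees between the index and the index plus nullity). Next I use $K\le1$ and the Rauch comparison with $\sn$ of constant curvature $1$: the points conjugate to $c(0)$ along $c|[0,1]$, counted with multiplicity, occur no earlier than along a great circle, so there are at most $(n-1)\lfloor l(c)/\pi\rfloor$ of them in $(0,1]$; by the Morse index theorem this number is exactly $\ind_\Omega(c)+\egon_\Omega(c)$. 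For $X=\Omega_{pq}M$ this already finishes the argument: $j\le\ind_\Omega(c)+\egon_\Omega(c)\le(n-1)\lfloor l(c)/\pi\rfloor$, hence $\lfloor l(c)/\pi\rfloor\ge j/(n-1)$, and as the left-hand side is an integer, $l(c)\ge[j/(n-1)]\pi$, i.e. $\cri_{\Omega}(h)\ge[\deg(h)/(n-1)]\pi$.

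For $X=\la M$ I would pass from the periodic to the based index form. With $p=c(0)$ one has $T_c\Omega_pM\subset T_c\la M$ of codimension $n$, and $I_{\la}$ restricts on $T_c\Omega_pM$ to $I_\Omega$; intersecting a maximal non-positive subspace of $(T_c\la M,I_{\la})$ with $T_c\Omega_pM$ drops the dimension by at most $n$, so $\ind_{\la}(c)+\egon_{\la}(c)\le\ind_\Omega(c)+\egon_\Omega(c)+n$. Combining with the conjugate-point count gives $j\le(n-1)\lfloor l(c)/\pi\rfloor+n$, hence $\lfloor l(c)/\pi\rfloor\ge(j-n)/(n-1)$, and therefore $l(c)\ge\lceil(j-n)/(n-1)\rceil\pi\ge\bigl([j/(n-1)]-1\bigr)\pi$, which is the asserted lower bound for $\cri_{\la}(h)$.

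The genuinely routine steps are the curvature comparisons of (a) and (b) and the conjugate-point count of (c). The part I expect to need the most care is the free-loop-space case of (c): one must justify the index--nullity sandwich on $\la M$ (where the critical point is really an $S^1$-orbit and $\egon_{\la}(c)$ always contains the reparametrization direction $c'$), pin down the constant in $\ind_{\la}(c)+\egon_{\la}(c)\le\ind_\Omega(c)+\egon_\Omega(c)+n$ so that it interacts correctly with the concavity bound $\conc(c)\le n-1$, and carry out the final rounding $\lceil(j-n)/(n-1)\rceil\ge[j/(n-1)]-1$ --- a step worth verifying separately in low dimensions.
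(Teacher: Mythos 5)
Your derivation of (a) and (b) is the paper's own route: Theorem~\ref{thm:one} reduces everything to upper bounds on $\conj(k)$, and your sine-field computation is precisely the Morse--Schoenberg comparison recorded in Remark~\ref{rem:morse-schoenberg} and Lemma~\ref{lem:conj} (which the paper cites rather than reproves). One small slip: the trace of the index form over the frame is $\int_{a_i}^{a_{i+1}}\bigl[(n-1)(\pi/\Delta)^2\cos^2-\ric(c',c')\sin^2\bigr]\,dt$ with $\Delta=a_{i+1}-a_i$; you dropped the factor $n-1$ on the derivative term, but under $\ric\ge(n-1)\delta$ the negativity still holds, so nothing breaks. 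Your rounding $\lceil(k+1)/(n-1)\rceil\le 1+k/(n-1)$ in (b) is correct.

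For (c) you deviate mildly from the paper. The paper perturbs (endpoints $q_j\to q$ that are nowhere conjugate to $p$, resp.\ bumpy metrics $g_j\to g$) so that the geodesic carrying $h$ has $\ind_{\Omega}(c)=j$ exactly, resp.\ $\ind_{\la}(c)\in\{j-1,j\}$, and then uses Remark~\ref{rem:morse-schoenberg}(c) together with $\conc(c)\le n-1$. You instead invoke the sandwich $\ind_X(c)\le j\le \ind_X(c)+\egon_X(c)$ at the (possibly degenerate) critical level and replace the concavity estimate by the codimension-$n$ intersection $\ind_{\la}(c)+\egon_{\la}(c)\le\ind_{\Omega}(c)+\egon_{\Omega}(c)+n$. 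Both ingredients are sound, but be aware that the right-hand half of the sandwich is exactly what the paper's perturbation is there to justify: the critical set at level $\cri_X(h)$ need not consist of isolated points or nondegenerate $S^1$-orbits, so you need Gromoll--Meyer localization or the limiting argument of the proof of Theorem~\ref{thm:one}. You flag this yourself; the fix is the paper's ``analogously to Theorem~\ref{thm:one}''.

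The one step that genuinely fails is the final rounding for the free loop space. The inequality $\lceil(j-n)/(n-1)\rceil\ge[j/(n-1)]-1$ holds for $n\ge3$ (write $j=q(n-1)+r$, $0\le r\le n-2$: the left side is $q-1$ for $r\le1$ and $q$ for $r\ge2$), but for $n=2$ the left side is $j-2$ while the right side is $j-1$. This cannot be repaired, because the stated bound is itself false at $n=2$: on the flat unit square torus ($K=0\le1$) the $(1,0)$-component of $\la T^2$ deformation retracts onto its critical set, a torus of closed geodesics of length $1$, so for $b<1$ there is a nontrivial class $h\in H_2(\la T^2,\la^{\le b}T^2;R)$ with $\cri_{\la}(h)=1<\pi=\bigl([2/1]-1\bigr)\pi$. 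What your argument actually proves, $\cri_{\la}(h)\ge\lceil(\deg(h)-n)/(n-1)\rceil\,\pi$, is correct in all dimensions and agrees with the asserted bound for $n\ge3$; the paper's own route via $\conc(c)\le n-1$ yields the same quantity, so the defect lies in the statement at $n=2$, not in your method --- but as written your proof asserts an inequality that is false there, so you should either restrict the $\la$-case of (c) to $n\ge3$ or state the weaker conclusion for surfaces.
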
 
As consequence from
Theorem~\ref{thm:two}(a) we obtain 
an upper bound for the length of
a shortest closed geodesic on a manifold of 
positive Ricci curvature:
\begin{theorem}
	\label{thm:three}
Let $(M,g)$ be a compact and simply-connected
Riemannian or
Finsler manifold
of dimension $n$ 
of positive Ricci curvature
$\ric\ge (n-1)\delta$ for some 
$\delta>0.$
And let 
$m$ be the smallest integer with
$1\le m\le n-1$ for which $M$ is
$m$-connected and $\pi_{m+1}(M)\not=0.$
We denote by $L=L(M,g)$ the length
of a (non-trivial) shortest closed geodesic.
Then 
$L \le \pi (m+1)/\sqrt{\delta},$
in particular $L \le \pi n /\sqrt{\delta}.$ 
\end{theorem}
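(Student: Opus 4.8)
The plan is to apply Theorem~\ref{thm:two}(a) to the free loop space $X=\Lambda M$ with a non-trivial relative homology class of degree exactly $m$, and to observe that the closed geodesic realizing its critical value has length at least $L$. First one checks that $m$ is well defined with $m\le n-1$: since $M$ is simply-connected, $\pi_1(M)=0$; if in addition $\pi_j(M)=0$ for all $2\le j\le n-1$, then $M$ is $(n-1)$-connected, and the Hurewicz theorem together with $H_n(M;\z)\cong\z$ (a simply-connected closed manifold is orientable) gives $\pi_n(M)\ne 0$. Hence the smallest integer $m\ge 1$ for which $M$ is $m$-connected and $\pi_{m+1}(M)\ne 0$ exists and satisfies $1\le m\le n-1$, so that $m+1\le n$.

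The key step is to produce the homology class. Consider the evaluation fibration $\Omega_p M\hookrightarrow \Lambda M\stackrel{\mathrm{ev}}{\longrightarrow} M$, $\mathrm{ev}(\gamma)=\gamma(0)$, which admits the section $s\colon M\to\Lambda M$ sending a point to the constant loop at it. The section splits the long exact homotopy sequence of the fibration, so $\pi_i(\Lambda M)\cong\pi_i(M)\oplus\pi_i(\Omega_p M)\cong\pi_i(M)\oplus\pi_{i+1}(M)$ for all $i$. Since $M$ is $m$-connected this shows that $\Lambda M$ is $(m-1)$-connected and $\pi_m(\Lambda M)\cong\pi_{m+1}(M)\ne 0$; the Hurewicz theorem then gives $H_m(\Lambda M;\z)\cong\pi_{m+1}(M)$, a non-trivial finitely generated abelian group (finitely generated because $M$ has the homotopy type of a finite simply-connected $CW$-complex). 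Choosing the coefficient field $R$ to be $\q$ or a suitable $\z/p$ we obtain $H_m(\Lambda M;R)\ne 0$. Because $M$ is $m$-connected, $H_m(M;R)=0$, and the constant loops form $\Lambda^{\le 0}M\ (\cong M)$; hence in the long exact sequence of the pair $(\Lambda M,\Lambda^{\le 0}M)$ the inclusion induces an injection $H_m(\Lambda M;R)\hookrightarrow H_m(\Lambda M,\Lambda^{\le 0}M;R)$, and we get a non-trivial class $h\in H_m(\Lambda M,\Lambda^{\le 0}M;R)$ with $\deg(h)=m$.

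It remains to conclude. Applying Theorem~\ref{thm:two}(a) with $X=\Lambda M$ and $b=0$ yields $\cri_{\Lambda}(h)\le\pi(\deg(h)+1)/\sqrt{\delta}=\pi(m+1)/\sqrt{\delta}$. By the remarks preceding Theorem~\ref{thm:one} there is a closed geodesic $c$ (a critical point of $F$ on $\Lambda M$) with $l(c)=\cri_{\Lambda}(h)$; since $h\ne 0$ we have $\cri_{\Lambda}(h)>b=0$, so $c$ is a non-trivial closed geodesic, and therefore $L\le l(c)=\cri_{\Lambda}(h)\le\pi(m+1)/\sqrt{\delta}\le\pi n/\sqrt{\delta}$. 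The curvature hypothesis enters only through Theorem~\ref{thm:two}(a), so I expect the main obstacle to be the middle step: exhibiting, over some coefficient field, a non-trivial relative class in $H_m(\Lambda M,\Lambda^{\le 0}M;R)$ of degree exactly $m$, which relies on the splitting of the free loop fibration by the constant loops together with the Hurewicz theorem. A secondary point requiring (minor) care is that the critical value is attained by a \emph{non-trivial} closed geodesic, which holds precisely because $\cri_{\Lambda}(h)>0$.
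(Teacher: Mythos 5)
Your proof is correct and follows essentially the same route as the paper: produce a non-trivial class $h\in H_m(\Lambda M,\Lambda^{\le 0}M;R)$ of degree $m$ from the non-vanishing of $\pi_{m+1}(M)$, apply Theorem~\ref{thm:two}(a), and note that the critical value is positive and realized by a closed geodesic. The only (harmless) difference is in how the class is exhibited: the paper invokes Klingenberg's construction of a non-trivial map $(D^m,S^{m-1})\to(\Lambda M,\Lambda^0 M)$ from a map $S^{m+1}\to M$, whereas you obtain it from the splitting of the evaluation fibration by constant loops together with the Hurewicz theorem and the exact sequence of the pair.
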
 
\begin{remark}
	(a) This improves the estimate
	$L \le 8\pi m\le 8 \pi (n-1)$ given
	in \cite[Thm. 1.2]{Ro}.
	
	\smallskip
	
	(b) If $(M,g)$ is not simply-connected and
	$\ric \ge (n-1)\delta$ for some positive
	$\delta$ then there is a shortest closed curve
	$c$
	which is homotopically non-trivial. This closed
	curve is a closed geodesic and
	$\ind_{\Lambda}(c)=\ind_{\Omega}(c)=0.$ From Lemma~\ref{lem:conj}
	we obtain $L(c)\le \pi /\sqrt{\delta}.$
	On the other hand choose $k\in \n$ such
	that $l(c^k)=kl(c)>\pi /\sqrt{\delta,}$
	here $c^k(t)=c(kt)$ denotes the $k$-th
	iterate of the closed geodesic $c.$
	Then we conclude from
	Remark~\ref{rem:morse-schoenberg}(a)
	that
	$\ind_{\Lambda}(c^k)\ge
	\ind_{\Omega}(c)\ge 1,$
	hence the closed geodesic $c$
	is not \emph{hyperbolic,}
	cf. \cite[Thm. 3.3.9]{Kl}.
	
	\smallskip
	
	(c) For a compact 
and simply-connected
Riemannian manifold $(M,g)$ of
positive sectional curvature $K\ge \delta$
it follows from the estimate
$\conj(n-1) \le 2\pi/\sqrt{\delta}$ that the length $L$ of a 
shortest closed geodesic satisfies
	$L\le 2\pi/\sqrt{\delta}.$ In the limiting case
	$L=2\pi/\sqrt{\delta}$ the metric is of constant
	sectional curvature, 
	cf.~\cite[Cor. 1]{Ra21}.	
\end{remark} 
\begin{theorem}
\label{thm:four}
Let $(M,g)$ be a compact Riemannian or
Finsler manifold
of dimension $n$ with $\ric \ge (n-1)\delta$
(resp. $K \ge \delta$) for some positive $\delta.$ 
For any pair $p,q\in M$ of points (also allowing
$p=q$) and $k \in \n$
there exist at least $k$ geodesics joining 
$p$ and $q$ (i.e. geodesic loops for $p=q$) with length
$\le (2(n-1)k+1)\pi/\sqrt{\delta},$	
(resp.
$\le (2k+1)\pi/\sqrt{\delta}$).
\end{theorem}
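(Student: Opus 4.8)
The plan is to carry out the Morse theory of the square root energy functional $F$ on the based loop space $X=\Omega_{pq}M$ (so $X=\Omega_pM$ when $p=q$), and to combine it with Theorem~\ref{thm:two} and with a density property of the homology of $X$. First I fix $b>0$ so small that $X^{\le b}=\emptyset$ in case $p\neq q$, and $X^{\le b}$ is the single point given by the constant loop $c_p$ in case $p=q$; then the critical points of $F$ lying strictly above level $b$ are precisely the non-constant geodesics joining $p$ and $q$ (the non-constant geodesic loops when $p=q$). Put $D=2(n-1)k$; we may assume $n\ge 2$.

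The topological input, which I regard as the heart of the proof, is the following: the set $Z=\{\,j\ge 0:H_j(X,X^{\le b};R)\neq 0\,\}$ is infinite, $\min Z\le n-1$, and any two consecutive elements of $Z$ differ by at most $n-1$; equivalently, every block of $n-1$ consecutive integers $\ge\min Z$ meets $Z$. Since $\ric\ge(n-1)\delta>0$ (resp.\ $K\ge\delta>0$) forces $\pi_1(M)$ to be finite and the universal cover $\widetilde M$ to be a compact simply connected $n$-manifold by Myers' theorem, and since each component of $\Omega_{pq}M$ is homotopy equivalent to the based loop space $\Omega\widetilde M$, this reduces to the Serre-type statement that the reduced loop space homology of a compact simply connected $n$-manifold has no gap longer than $n-1$ (the infinitude is Serre's classical theorem; the sharpened form rests on the loop space Betti number estimates for manifolds, cf.\ the discussion of Serre's theorem in~\cite{Kl}). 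Granting this, I choose greedily degrees $d_1<d_2<\dots<d_k$ in $Z$ by $d_1=\min Z$ and $d_{i+1}=\min\{\,z\in Z:z\ge d_i+n\,\}$; since the $n-1$ consecutive integers $d_i+n,\dots,d_i+2n-2$ must meet $Z$, one has $d_{i+1}-d_i\le 2(n-1)$, hence
\begin{equation*}
d_k\;\le\;(n-1)+(k-1)\cdot 2(n-1)\;=\;(2k-1)(n-1)\;\le\;D.
\end{equation*}

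Now for each $i$ pick a nonzero class $h_i\in H_{d_i}(X,X^{\le b};R)$ and set $a_i=\cri_X(h_i)$. As recalled before Theorem~\ref{thm:one}, $a_i$ is attained by a geodesic joining $p$ and $q$; more precisely, since $h_i$ enters the image of the inclusion-induced homomorphism exactly at level $a_i$ and has degree $d_i$, the generalized Morse lemma for $F$ (cf.~\cite{Kl}) shows that the local homology $H_{d_i}(X^{\le a_i+\epsilon},X^{\le a_i-\epsilon};R)$ is nontrivial. As that group is the direct sum, over the geodesics $c$ at level $a_i$, of their local homologies, each supported in degrees $[\ind_\Omega(c),\ind_\Omega(c)+\egon_\Omega(c)]$, there is a geodesic $c_i$ at level $a_i$ with $\ind_\Omega(c_i)\le d_i\le\ind_\Omega(c_i)+\egon_\Omega(c_i)\le\ind_\Omega(c_i)+n-1$, and $l(c_i)=a_i$. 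These $c_i$ are pairwise distinct: if $c_i=c_{i'}$ with $i<i'$, then $d_i$ and $d_{i'}$ both lie in $[\ind_\Omega(c_i),\ind_\Omega(c_i)+n-1]$, so $d_{i'}-d_i\le n-1$, contradicting $d_{i'}-d_i\ge n$. Finally, by Theorem~\ref{thm:two}(a), if $\ric\ge(n-1)\delta$ then
\begin{equation*}
l(c_i)\;=\;\cri_X(h_i)\;\le\;\frac{\pi\,(d_i+1)}{\sqrt{\delta}}\;\le\;\frac{\pi\,(D+1)}{\sqrt{\delta}}\;=\;\frac{(2(n-1)k+1)\,\pi}{\sqrt{\delta}}
\end{equation*}
and by Theorem~\ref{thm:two}(b), if $K\ge\delta$ then $l(c_i)\le\pi\{1+d_i/(n-1)\}/\sqrt{\delta}\le\pi\{1+D/(n-1)\}/\sqrt{\delta}=(2k+1)\pi/\sqrt{\delta}$. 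Thus $c_1,\dots,c_k$ are $k$ distinct geodesics joining $p$ and $q$ within the asserted length bounds.

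The step I expect to be the main obstacle is the density statement for $Z$: establishing, over an arbitrary coefficient field $R$, that the loop space homology of a compact $n$-manifold has no gap longer than $n-1$. The rest is standard Lusternik--Schnirelmann and Morse theory; its one delicate point (the use of the generalized Morse lemma to control possibly degenerate geodesics through the width-$(n-1)$ interval $[\ind_\Omega(c),\ind_\Omega(c)+\egon_\Omega(c)]$) is exactly what makes the gap condition $d_{i+1}-d_i\ge n$, rather than mere distinctness of the $d_i$, necessary for the argument.
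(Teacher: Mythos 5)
Your argument is correct in outline but follows a genuinely different route from the paper. The paper does not count the degrees in which the loop--space homology is nonzero; instead it takes from \cite[p.~225--226]{BTZ} a non-nilpotent even-dimensional class $\omega\in H^{2l}(\Omega_{pq}M;\q)$ with $l\le n-1$, forms the subordinated sequence $h_k=\omega\cap h_{k+1}$ with $\deg(h_k)=2lk\le 2(n-1)k$, and invokes the Lusternik--Schnirelmann principle that subordinated classes have non-decreasing critical values, equality forcing infinitely many geodesics at one level. That route produces geodesics of pairwise \emph{distinct lengths} (which is exactly the notion of distinctness the paper adopts in the remark following Theorem~\ref{thm:four}), whereas your index-window argument produces $k$ pairwise distinct critical points that could a priori share lengths; both are reasonable readings of ``at least $k$ geodesics,'' but you should flag the discrepancy. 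Your route replaces the multiplicative structure on $H^*(\Omega M)$ by a purely additive input (the gap bound for the set $Z$ of nonzero degrees) combined with the Gromoll--Meyer localization $\ind_\Omega(c)\le d\le\ind_\Omega(c)+\egon_\Omega(c)\le\ind_\Omega(c)+n-1$, and the numerology comes out the same: $d_k\le(2k-1)(n-1)\le 2(n-1)k$.

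The one step you must actually supply is the density statement for $Z$: it is \emph{not} in \cite{Kl}, and since it is the pivot of your whole argument it cannot be left as a black box. It is, however, true, with a short proof from the homology Serre spectral sequence of the path fibration $\Omega\widetilde M\to P\widetilde M\to\widetilde M$ over a field $R$: if $a$ is a degree with $H_a(\Omega\widetilde M;R)\ne 0$ and $q_1>a$ is the next such degree, then $E_2^{0,q_1}=H_{q_1}(\Omega\widetilde M;R)$ can only die under a differential arriving from some $E_r^{r,q_1-r+1}=H_r(\widetilde M;R)\otimes H_{q_1-r+1}(\Omega\widetilde M;R)\ne 0$; this forces $2\le r\le n$ and, by minimality of $q_1$, $q_1-r+1\le a$, hence $q_1-a\le n-1$. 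Unboundedness of $Z$ follows from the survival of the corner term $E_2^{n,q_{\max}}$ if $H_*(\Omega\widetilde M;R)$ were bounded above. (Statements of this type are standard in the quantitative literature around \cite{NR} and \cite{Gr}, but cite or prove them precisely.) Two smaller points: the direct-sum decomposition of $H_{d_i}(X^{\le a_i+\epsilon},X^{\le a_i-\epsilon};R)$ over critical points requires the critical set at level $a_i$ to be finite --- if it is infinite you are done trivially, but say so; and your reduction to $\widetilde M$ via Myers is needed and is in fact more careful than the paper's own proof, which silently assumes simple connectivity.
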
 
\begin{remark}
	(a)
	This result improves the bounds
	$16\pi(n-1)k$ resp.
	$(16(n-1)k+1)\pi$ given in
	\cite[Thm. 1.3]{Ro} for $\delta=1.$
	
	\smallskip
	
	(b)
	Here two geodesics $c_1,c_2 \in \Omega_{pq}M$
	are called \emph{distinct} if 
	their lengths $l(c_1)\not=l(c_2)$ are distinct.
	From a geometric point of view this is not
	very satisfactory. 
	If we choose distinct points $p,q\in S^n$
	on the sphere with the standard metric of
	constant sectional curvature $K=1,$
	which are not antipodal points,
	then any geodesic joining $p$ and $q$ is part
	of the unique great circle 
	$c:\R \longrightarrow S^n$ through $p$ and $q.$
	So in this case the geodesics whose existence
	is claimed in Theorem~\ref{thm:four} all
	come from a single closed geodesic,
	cf. \cite[p.181]{Kl}.
	Closed geodesics are called
	\emph{geometrically distinct} if they are
	different as subsets of $M$ (or in the case
	of a non-reversible Finsler metric if 
	their
	orientations are different when they agree
	as subsets of $M$).
	If the metric $g$ is bumpy then there
	are only finitely many geometrically distinct closed
	geodesics below a fixed length. Hence for a bumpy metric for almost
	all pairs of points $p,q$ on $M$ there is no 
	closed geodesic through these points.
	Hence in this case the geodesics 
	constructed in Theorem~\ref{thm:four}
	do not come from a single closed
	geodesic.
	
	\smallskip
	
	(c) There are related 
	\emph{curvaturefree} estimates depending only on
	the diameter due to
	Nabutovsky and Rotman.
	In ~\cite{NR} they show that for
	any pair $p,q$ of points in a compact
	$n$-dimensional Riemannian manifold with diameter
	$d$ and for every $k \in \n$ there are
	at least
	$k$ distinct geodesics joining $p$ and $q$ 
	of length $\le 4nk^2d.$
	\end{remark}
 \section{Proofs}
 \begin{proof}[Proof of Theorem~\ref{thm:one}]
 	(a) We first give the proof for the case
 	$X=\Omega_{pq}M$ for points $p,q$
 	and for a homology class
 	$h \in H_k(\Omega_{pq}M, \Omega^{\le b}M;R).$
 	Here we also allow 
 	the case $p=q.$ 
 	We denote by $d: M \times M \longrightarrow \R$ the \emph{distance} induced by the metric $g.$
 	We choose a sequence
 	$(q_j)_{j\ge 1}\subset M$ such that $\lim_{j\to \infty} d(p,q_j)=0$
 	and such that along any geodesic joining $p$ and
 	$q_j$ the point $q_j$ is not a conjugate point
 	to $p.$
 	This is possible as a consequence of 
 	Sard's theorem, cf.
 	\cite[Cor. 18.2]{Mi} for the Riemannian
 	case and
 	\cite[Cor. 8.3]{Ra04} for the Finsler case.
 	 As a consequence the square root energy functional
 	$F_j=F:
 	\Omega_{pq_j}M \longrightarrow \R$ is a \emph{Morse function.}
 	There is a homotopy equivalence 
 	$\zeta_{qq_j}:
 	\Omega_{pq}M \longrightarrow \Omega_{pq_j}M$
 	between loops spaces with
 	$F(\gamma)=\lim_{j\to \infty}F(\zeta_{qq_j}(\gamma))$
 	for all $\gamma \in \Omega_{pq}M,$
 	cf. \cite[Lem.1]{Ra21}.
 	Let $h\in H_k(\Omega_{pq}M,\Omega_{pq}^{\le b}M;R),$
 	then it follows from Morse theory for the
 	functional $F_j$ that there is a 
 	geodesic $c_j$ 
 	joining $p$ and $q_j$
 	whose length $l(c_j)$ equals the
 	critical value $\cri (\zeta_{qq_j}(h))$
 	of the homology class
 	$\zeta_{qq_j,*}(h)\in H_k
 	(\Omega_{pq_j}M,\Omega_{pq_j}^{\le b}M;R).$
 	The Morse index
 	$\ind_{\Omega} (c_j)$ as critical point of $F_j$ equals
 	the degree of the homology class by the 
 	Morse lemma,
 	cf. \cite[Sec. 8]{Ra04}. By definition of $\conj(k)$ we
 	obtain $l(c_j)\le \conj(k).$ 
 	But since $\cri_{\Omega} (h)
 	=\lim_{j\to \infty} l(c_j)\le \conj (k)$ we
 	finally arrive at the claim
 	$\cri_{\Omega}(h)\le \conj(k).$
 	
 	\smallskip
 	
 	(b) Now we assume $X=\Lambda M.$
 	Then we use a sequence $g_j$ of bumpy Riemannian
 	or Finsler
 	metrics
 	converging to the metric $g$ 
 	with respect to the strong $C^r$ topology for
 	$r \ge 2,$ resp. $r\ge 4$ in the Finsler case.
 	 	We can choose such a sequence  	by the 
 	\emph{bumpy metrics theorem} for Riemannian metrics
 	due to Abraham~\cite{Ab} and Anosov~\cite{An}, and by the
 	generalization to the Finsler case, 
 	cf. \cite{RT2020}.
 	The square root energy functional
 	$F_j: \Lambda M \longrightarrow \R$ is then a
 	\emph{Morse-Bott function,} the critical set equals the
 	set of closed geodesics which is the union
 	of disjoint 
 	and non-degenerate critical
 	$S^1$-orbits. Hence all closed geodesics
 	are non-degenerate, i.e. there is no periodic Jacobi
 	field orthogonal to the geodesic. Then for any 
 	$j$ there is a closed geodesic of $g_j$
 	such that the length $l(c_j)$ with respect to $g_j$
 	equals the critical value
 	$\cri_{\Lambda,j}(h)$ with respect to $g_j.$
 	Hence Morse theory implies that
 	the index $\ind_{\Lambda}(c_j)\in \{k,k-1\},$
 	since the critical submanifold is $1$-dimensional.
 	Then $\ind_{\Omega,j}(c_j)\le k$ which implies
 	that the length $l_j(c_j)$ of $c_j$
 	with respect to the metric $g_j$ satisfies
 	$\cri_{\Lambda,j}(h)=l_j(c_j)\le \conj_j(k).$
 	Here $\conj_j(k)$ is defined with respect
 	to the metric $g_j.$
 	Then $\cri_{\Lambda}(h)=
 	\lim_{j\to \infty} \cri_{\Lambda,j}(h)
 	\le \lim_{j\to \infty}\conj_j(k)
 	=\conj(k).$
 	 \end{proof}
\begin{remark}	
	\label{rem:morse-schoenberg}
The Morse-Schoenberg
	comparison result~\cite[Thm. 2.6.2]{Kl},
	\cite[Lem.~3]{Ra04}
	implies: 
	Let $c:[0,1]\longrightarrow M$ be a
	geodesic of length $l(c),$ and
	$k \in \n.$
	
\smallskip

(a) If $\ric \ge (n-1)\delta$ 
for $\delta >0$
and if
$l(c) > \pi k /\sqrt{\delta},$
then $\ind_{\Omega}(c)\ge k.$	

\smallskip

(b) If $K \ge \delta$ 
for a positive $\delta$
and if 
$l(c) > \pi k/\sqrt{\delta},$
then $\ind_{\Omega}(c)\ge k(n-1).$

\smallskip

(c) If $K\le 1$ and if $l(c)\le \pi k$
then $\ind_{\Omega}(c)\le (k-1)(n-1).$ 
\end{remark}
This implies
\begin{lemma}
	\label{lem:conj}
	Let $(M,g)$ be a manifold with Riemannian
	metric resp. Finsler metric $g.$
	
	\smallskip
		
	(a) If $\ric \ge (n-1)\delta $
	for $\delta >0$ then
	$\conj(k)\le (k+1)\pi/\sqrt{\delta}$
	for $k \in \n_0.$	
	
	\smallskip
	
	(b) If $K \ge \delta$ for $\delta>0$
	we have
	$\conj(k(n-1))\le (k+1)\pi/\sqrt{\delta}$
	for $k\in \n_0.$
\end{lemma}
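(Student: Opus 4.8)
The plan is to deduce both parts directly from the Morse--Schoenberg comparison estimates already collected in Remark~\ref{rem:morse-schoenberg}; the only thing to do is to translate those pointwise index bounds along a geodesic into a bound on the infimum defining $\conj$.

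For part (a), fix $k \in \n_0$ and apply Remark~\ref{rem:morse-schoenberg}(a) with the integer there replaced by $k+1$: every geodesic $c$ with $l(c) > (k+1)\pi/\sqrt{\delta}$ has $\ind_{\Omega}(c) \ge k+1$. Hence, for any $\varepsilon > 0$, the number $L := (k+1)\pi/\sqrt{\delta} + \varepsilon$ has the property that every geodesic of length at least $L$ has Morse index at least $k+1$; by the definition of $\conj(k)$ this forces $\conj(k) \le L = (k+1)\pi/\sqrt{\delta} + \varepsilon$. Since $\varepsilon > 0$ was arbitrary, $\conj(k) \le (k+1)\pi/\sqrt{\delta}$.

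For part (b) I argue in the same way, starting instead from Remark~\ref{rem:morse-schoenberg}(b) with the integer there replaced by $k+1$: every geodesic $c$ with $l(c) > (k+1)\pi/\sqrt{\delta}$ satisfies $\ind_{\Omega}(c) \ge (k+1)(n-1)$. Since $n \ge 2$, we have $(k+1)(n-1) = k(n-1) + (n-1) \ge k(n-1) + 1$, so such a geodesic has index at least $k(n-1)+1$. Thus for every $\varepsilon > 0$ the number $L := (k+1)\pi/\sqrt{\delta} + \varepsilon$ lies in the set defining $\conj(k(n-1))$, so $\conj(k(n-1)) \le L$, and letting $\varepsilon \to 0$ gives $\conj(k(n-1)) \le (k+1)\pi/\sqrt{\delta}$.

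There is essentially no obstacle here beyond this bookkeeping: the geometric content, namely turning a lower curvature bound together with a length bound into a lower bound on the Morse index $\ind_{\Omega}(c)$, is entirely contained in the cited Morse--Schoenberg comparison theorem, and the only subtlety is the passage from the strict length inequality appearing in Remark~\ref{rem:morse-schoenberg} to the infimum in the definition of $\conj$, which is absorbed by the arbitrarily small $\varepsilon$. (Note also that the case where $M$ admits no geodesic of length $\ge L$ causes no problem, since the defining condition on $L$ then holds vacuously.)
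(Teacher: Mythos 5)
Your proposal is correct and matches the paper's approach: the paper derives Lemma~\ref{lem:conj} directly from Remark~\ref{rem:morse-schoenberg} (it simply writes ``This implies''), and your argument just fills in the elementary bookkeeping—shifting the integer to $k+1$, noting $(k+1)(n-1)\ge k(n-1)+1$ in part (b), and handling the strict-versus-nonstrict length inequality with the $\varepsilon$ and the infimum in the definition of $\conj$.
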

\begin{proof}[Proof of Theorem~\ref{thm:two}]
From Theorem~\ref{thm:one} and 
Lemma~\ref{lem:conj} we immediately obtain
the statements (a) and (b).
Statement (c) follows analogously to the arguments
in the proof of Theorem~\ref{thm:one} together
with Remark~\ref{rem:morse-schoenberg}(c) and
the estimate $\conc(c)\le n-1.$
\end{proof}
\begin{proof}[Proof of Theorem~\ref{thm:three}]
By assumption there is a homotopically non-trivial
map $\phi: S^{m+1}\longrightarrow M,$
which also defines a 
homotopically non-trivial map
$\tilde{\phi}: (D^{m},S^{m-1})\longrightarrow 
(\Lambda M, \Lambda^0 M),$
cf. \cite[Thm.~2.4.20]{Kl}.
This defines a non-trivial
homology class $h \in H_{m}(\Lambda M, \Lambda^0 M;R)$
for some 
coefficient field $R.$
Then there exists a closed geodesic
$c$ with length
$l(c)=\cri_{\Lambda}(h).$
We conclude from Theorem~\ref{thm:two}(a)
that $l(c)=
\cri_{\Lambda}(h)\le \pi (m+1)/\sqrt{\delta}.$ 
\end{proof}
\begin{proof}[Proof of Theorem~\ref{thm:four}]
Since $M$ is simply-connected 
we conclude from a minimal model for the
rational homotopy type of $\Omega M:$
There exists
a non-trivial cohomology class
$\omega \in H^{2l}(\Omega_{pq};\q)$
of even degree $2l$ 
for some $1\le l\le n-1,$
which is not a torsion class with respect to
the cup product, i.e. $\omega^k\not=0$ for all
$k\ge 1.$
There is a sequence $h_k\in H_*(\Omega_{pq}M,\Omega_{pq}^{\le b}M;R), k\ge 1$
of non-trivial homology classes
with $h_k=\omega \cap h_{k+1},
\deg(h_k)=2lk, k\ge 1.$ 
Here $\cap$ denotes the \emph{cap product.}

Then we use the principle of 
\emph{subordinated homology classes,}
cf.~\cite[p.225--226]{BTZ} and conclude:
$\cri_{\Omega}(h_k)\le 
\cri_{\Omega}(h_{k+1})$ for all
$k\ge 1.$
Here 
equality only holds if there
are infinitely many distinct
geodesics in $\Omega_{pq}(M)$
of equal length $l(c)=\cri_{\Omega}(h_k)=
\cri_{\Omega}(h_{k+1}).$
Hence we can assume that
$\cri_{\Omega}(h_k)< 
\cri_{\Omega}(h_{k+1})$
and obtain a sequence
$c_k \in \Omega_{pq}M$ of geodesics
with $l(c_k)=\cri_{\Omega}(h_k).$
Since $\deg(h_k)=2lk\le 2(n-1)k$
we obtain the claim from
Theorem~\ref{thm:two}.
\end{proof}
\begin{remark}
(a) If $M$ is simply-connected and compact then
it was shown by Gromov~\cite[Thm. 7.3]{Gr} that there exist
positive constants $C_1=C_1(g),C_2=C_2(g)$ depending on 
the metric $g$ such that for all
homology classes $h \in H_*(\Lambda M;R)$
the following inequalities hold:
\begin{equation*}
C_1 \, \cri_{\Lambda}(h)
<
\deg (h)
<
C_2 \, \cri_{\Lambda}(h)\,.
\end{equation*}

\smallskip

(b)
If $M=S^n$ is a sphere of dimension $n\ge 3$
it is shown in \cite[Thm. 1.1]{HR} that 
there are positive numbers $\overline{\alpha}
=\overline{\alpha}(g),
\beta=\beta(g),$ depending on $g$ such that 
\begin{equation*}
	\overline{\alpha}\, \cri_{\Lambda}(h)-\beta
	<
	\deg (h)
	<
	\overline{\alpha} \,\cri_{\Lambda}(h)+\beta
\end{equation*}
holds for all $h \in H_*(\Lambda S^n).$
The number $\overline{\alpha}$ is called
\emph{global mean frequency.}
In case of positive Ricci curvature 
$\ric\ge (n-1)\delta$ we conclude
from Theorem~\ref{thm:two}(a):
$\sqrt{\delta}/\pi \le
\overline{\alpha}.$
If $K\le 1$ then $\overline{\alpha}\le 
(n-1)/\pi.$
\end{remark}

\end{document}